\documentclass[11pt,reqno,oneside, a4paper]{amsart}
\usepackage[ansinew]{inputenc}
\usepackage[english]{babel}
\usepackage[T1]{fontenc} 
\usepackage{lmodern}
\usepackage{amsfonts} 		
\usepackage{amsmath}
\usepackage{amssymb}
\usepackage{bbm}
\usepackage{amsthm}
\usepackage{arydshln}
\usepackage{xcolor}
\usepackage{enumitem}
\usepackage{mathabx}
\usepackage{comment}
\usepackage{mathrsfs}
\usepackage{url}
\usepackage{hyperref}
\usepackage{cite}
\usepackage{graphicx}
\usepackage{caption}
\usepackage{algorithm}
\usepackage{algpseudocode}
\usepackage{subcaption}

\usepackage[capitalize]{cleveref}
\crefname{enumi}{}{}
\crefname{equation}{}{}

\usepackage{geometry}

 \specialcomment{supplementary}
{\colorlet{savedcolor}{.} \color{blue} \begingroup \ttfamily \bigskip \smallskip  \noindent \underline{Supplementary details:} \newline \newline \footnotesize }{\endgroup \smallskip \color{savedcolor}}
 \excludecomment{supplementary}

\setenumerate[0]{label=\textnormal{(\roman*)}}

\numberwithin{equation}{section}

\newtheorem{thm}{Theorem}[section]
\newtheorem{definition}[thm]{Definition}

\theoremstyle{remark}

\newtheorem{remark}[thm]{Remark}

\begin{document}

\title[Parameter estimation in  generalized fractional  neuronal models]{Parameter estimation in  generalized fractional  neuronal models}

\author[Ilmonen]{Pauliina Ilmonen}
\address{Aalto University School of Science, Department of Mathematics and Systems Analysis, PO Box 11100, 00076 Aalto, Finland}
\email{pauliina.ilmonen@aalto.fi}

\author[Laurikkala]{Milla Laurikkala}
\address{Aalto University School of Science, Department of Mathematics and Systems Analysis, PO Box 11100, 00076 Aalto, Finland}
\email{milla.laurikkala@aalto.fi}

\author[Pirozzi]{Enrica Pirozzi}
\address{Department of Mathematics and Physics, University of Campania Luigi Vanvitelli, Caserta, 81100, Italy}

\email{enrica.pirozzi@unicampania.it}

\author[Caputo]{Luigia Caputo}
\address{Department of Mathematics and Applications, University of Naples Federico II, Naples, 80126, Italy}
\email{luigia.caputo@unina.it}

\author[Viitasaari]{Lauri Viitasaari}
\address{Aalto University School of Business, Department of Information and Service Management, PO Box 21210, 00076 Aalto, Finland}
\email{lauri.viitasaari@aalto.fi}

\keywords{Neuronal modeling, parameter estimation, non-Markovian dynamics, noise reconstruction}
\subjclass[2020]{60G22, 62M09, 92C20, 26A33}
\date{\today}

\thanks{
P. Ilmonen and M. Laurikkala gratefully acknowledge support from the Research Council of Finland via Finnish Centre of Excellence in Randomness
and Structures 
(decision number~346308). M. Laurikkala gratefully acknowledges support from Emil Aaltonen Foundation (grant number 250110 K1).}

\begin{abstract}
We investigate a generalized stochastic fractional neuronal model combining
fractional dynamics with correlated stochastic inputs. The proposed framework is
described by a fractional differential equation driven by a latent stochastic process
with stationary increments and mean-reverting structure. This formulation allows the inclusion of both short-range and long-range
dependence structures and naturally produces non-exponential relaxation phenomena.
The main goal is the development of a feasible parameter estimation
procedure based on discrete observations of the neuronal state process. We propose
a two-step methodology. First, the parameters governing the fractional dynamics are
estimated by exploiting the asymptotic behavior of Mittag--Leffler functions near
the origin. Subsequently, the latent stochastic input is reconstructed through
fractional differentiation techniques, allowing the estimation of the parameters
governing the hidden noise dynamics.
We derive quantitative error bounds for the estimators and analyze the reconstruction
error of the latent process under suitable regularity assumptions on the driving
noise. In particular, the interplay between the order of the fractional derivative
and the H\"older regularity of the noise process naturally emerges in the stability
analysis of the reconstruction procedure.
Finally, simulation studies illustrate the applicability of the proposed methodology
and highlight the influence of memory effects and noise regularity on the quality
of statistical inference. The results support the relevance of fractional stochastic
analysis for the modeling and inference of neuronal systems with memory and correlated
inputs.
\end{abstract}

\allowdisplaybreaks


\maketitle

{\footnotesize\tableofcontents}

\section{Introduction}
The mathematical modeling of neuronal activity has a long and rich history, spanning biophysical, probabilistic, and more recently fractional-calculus-based approaches. Classically models are based on Markovian structures, see monograph 
\cite{Tuckwell1988a} and a survey \cite{SacerdoteGiraudo2013}. Recently however, 
the mathematical modeling of neuronal activity has progressively evolved from purely Markovian frameworks toward nonlocal and memory-dependent formulations. The reason behind this is the experimental evidence indicating that neuronal systems exhibit persistent temporal correlations, hereditary effects, adaptation mechanisms, and anomalous relaxation phenomena that cannot be adequately described by classical stochastic differential equations driven by white noise. This has motivated the introduction of stochastic models based on fractional calculus, which have gained increasing attention in the description of complex biological systems where memory effects and nonlocal dynamics play a fundamental role \cite{Magin2006,Mainardi2010,MeerschaertSikorskii2012,MetzlerKlafter2000}.

Fractional derivatives, and in particular the Caputo derivative, provide a convenient mathematical tool to incorporate such memory effects into dynamical systems. In this framework, the evolution of the system depends not only on its current state but also on its entire past history, allowing for a more realistic modeling of biological phenomena. This perspective has been successfully adopted in several contexts, including anomalous diffusion, viscoelasticity, and mathematical neuroscience \cite{Pirozzi2024b,Teka2014}.
In the context of neuronal modeling, fractional stochastic dynamics was investigated in \cite{Pirozzi2018}, where a model driven by colored noise was introduced to describe correlated synaptic inputs and adaptation effects in neuronal firing activity. That model already demonstrated that fractional dynamics naturally produce non-exponential relaxation and persistent correlations consistent with experimentally observed neuronal behavior. Subsequently, the class of fractionally integrated stochastic processes was further developed in \cite{AbundoPirozzi2021,Pirozzi2024}. The processes introduced therein generalize classical Gauss--Markov and Ornstein--Uhlenbeck dynamics by replacing the exponential kernel with Mittag--Leffler type kernels arising from fractional integration, providing analytically tractable models with explicit covariance structures and long-memory properties.
A complementary and closely related framework is provided by time-changed stochastic processes, typically constructed via inverse subordinators, which lead to non-Markovian dynamics with heavy-tailed waiting times and long-range dependence \cite{MeerschaertStraka2013}. These processes offer a flexible framework for modeling subdiffusive behaviors and complex temporal structures, and have been applied to biological systems to capture temporal patterns arising in intracellular dynamics and neuronal activity \cite{LeonenkoPirozzi2025}. In particular, the combination of fractional integration and stochastic time changes leads to models that naturally exhibit both long memory and non-exponential relaxation. 

Despite this growing literature, parameter estimation for neuronal systems with latent stochastic inputs remains a challenging and largely open problem. The main difficulty lies in the simultaneous presence of three features: (i) fractional dynamics, (ii) partially observed systems, and (iii) general noise structures deviating from classical Brownian motion.
In this article, we consider a generalized neuronal model of the form
$$
D^\alpha V_t = A V_t + \eta(t),
$$
where $D^\alpha$ denotes the Caputo fractional derivative and the stochastic input $\eta(t)$  is modeled as a mean-reverting process satisfying
\[
d\eta(t) = \Theta(b-\eta(t))\,dt + \sigma\,dG_t,
\]
with $G$ a stochastic process with stationary increments. This formulation extends classical neuronal models by allowing for both fractional dynamics in the membrane potential and a flexible class of driving noises \cite{Pirozzi2024b}, and can be viewed simultaneously as a generalized fractional neuronal model extending \cite{Pirozzi2018,Pirozzi2024b}, as a generalized fractionally integrated stochastic system in the spirit of \cite{AbundoPirozzi2021,Pirozzi2024}, and as a non-Markovian neuronal model with latent correlated stochastic input. The stochasticity arises from the generalized Ornstein--Uhlenbeck model for the noise $\eta$ that acts as the random noise for the system describing observed $V$. The memory effect can be incorporated to the model via two mechanism; 1) through non-local fractional derivative $D^\alpha$ incorporating memory of $V$ and, 2) through random noise $\eta$ that can possess arbitrary memory structures via driving noise $dG_t$ that essentially is simply assumed to have stationary increments.  This formulation extends classical neuronal models by allowing for both fractional dynamics in the membrane potential and a flexible class of driving noises \cite{Pirozzi2024b}, and can be viewed simultaneously as a generalized fractional neuronal model extending \cite{Pirozzi2018,Pirozzi2024b}, as a generalized fractionally integrated stochastic system in the spirit of \cite{AbundoPirozzi2021,Pirozzi2024}, and as a non-Markovian neuronal model with latent correlated stochastic input.

Our main goal is to develop a feasible parameter estimation procedure for unknown  $(\alpha,A,b,\Theta,\sigma)$ from the observations $V$. Our approach follows a natural two-step strategy: first, we estimate the parameters $(\alpha,A)$ governing the fractional dynamics by exploiting the small-time asymptotic properties of the solution expressed in terms of Mittag--Leffler functions. After parameters $A$ and $\alpha$ are recovered, we reconstruct the latent process $\eta$ and estimate the parameters of its dynamics by using well-established procedures for parameter estimation in Ornstein--Uhlenbeck models, see \cite{Marko1,Vasicek2025} and references therein. In particular, if $\eta$ can be reconstructed accurately, we can follow non-parametric approaches presented in \cite{Marko1,Vasicek2025}  for the estimation of $(b,\Theta,\sigma)$. As our main findings, we show that parameters $(\alpha,A)$ can be recovered from observations $V$ by dense observations in small-time asymptotics, allowing to recover latent noise $\eta$. After that, one can follow procedures described in \cite{Vasicek2025} to recover parameters of the stationary $\eta$ by controlling the error one makes in recovering (a priori unobserved) $\eta$. 

Our contribution is twofold. First, we introduce a generalized fractional neuronal model that encompasses several existing approaches as special cases. Second, we propose a practical estimation procedure allowing to estimate all parameters simultaneously by combining small- and large-time asymptotics. 

The rest of the paper is organized as follows. In Section \ref{sec:theory} we introduce our model and define our estimation procedure together with our main theorems providing quantitative error bounds for the estimators. In Section \ref{sec:simulations} we illustrate our procedure via numerical examples highlighting both its effectiveness and its limitations. We end the paper with short discussions in Section \ref{sec:conclusions}.

\section{Theoretical approach}
\label{sec:theory}
The model is defined via equations 
\begin{eqnarray}\label{Gmod}
 && D^\alpha V_t = AV_t + \eta(t), \qquad V(0)=V_0,
\nonumber\\
&&
\\
&& d\eta(t) = \Theta(b-\eta(t))dt + \sigma dG_t, \qquad \eta(0)=0,\nonumber
 \end{eqnarray}
 where $\alpha \in (0,1), A,b,\Theta \in \mathbb R.$ Here $D^\alpha f$ denotes the Caputo fractional derivative defined as (see, e.g. \cite{MeerschaertStraka2013})
 \begin{equation}\label{eq:Caputo}
	D^{\alpha}f(t)=\frac{1}{\Gamma(1-\alpha)}\int_0^t f^\prime (\tau) (t-\tau)^{-\alpha} d\tau,
\end{equation}
  Without loss of generality, we assume $V_0=1$ for notational simplicity. Moreover, we assume $A<0$, referring to the Leaky Integrate-and-Fire (LIF) neuronal model  based on the stochastic version of the Langevin equation, \cite{Burk}. The solution $V$ is given by  (see \cite{Pirozzi2024})
\begin{align}
    V_t &= E_\alpha(t^\alpha A) + \int_0^t s^{\alpha-1}E_{\alpha,\alpha}(s^\alpha A)\eta(t-s)ds, \quad \forall t>0, \quad V_0=1. \label{eq:sol}
\end{align}
Here $E_{\alpha,\beta}$ is the two-parameter Mittag-Leffler function given by 
\begin{equation}
\label{eq:ML}
E_{\alpha,\beta}(z) = \sum_{k=0}^\infty \frac{z^k}{\Gamma(\alpha k + \beta)}
\end{equation}
and $E_\alpha(z) = E_{\alpha,1}(z)$. 
From \eqref{eq:ML} we obtain asymptotical behaviour at zero given by
$$
E_{\alpha,\beta}(x) \sim 1 + \frac{x}{\Gamma(\alpha + \beta)}.
$$
This motivates the following definition.
\begin{definition}
    We set 
    \begin{equation}
\label{eq:hatalpha}
    \hat\alpha = I_{V_{2t}<1, V_t<1}\frac{\log(1-V_{2t}) - \log(1-V_t)}{\log 2}
\end{equation}
with the convention $\hat\alpha = 0$ if $I_{V_{2t}<1, V_t<1}=0$
and
\begin{equation}
\label{eq:hatA}
    \hat{A} = \Gamma(1+\hat\alpha)t^{-\hat\alpha}(V_t-1).
\end{equation}
\end{definition}
\begin{remark}
\label{rmk:regression}
In practice the estimators can be computed by regressing $\log(1-V_t)$ against $\log t$ and use the asymptotics 
$$
\log(1-V_t) \sim \alpha \log t + \log\left[\frac{|A|}{\Gamma(1+\alpha)}\right]
$$
as we have done in Section \ref{sec:simu}. This provides more stable estimators as asymptotics $t\to 0$ becomes numerically unstable due to discretisation errors.
\end{remark}
Once parameters $\alpha$ and $A$ are recovered, one can retrieve the noise process $\eta$ from 
$$
\eta(t) =D^\alpha V_t - AV_t.
$$
In order to stabilize this further, define 
\begin{equation}
    \tilde{V}_t = V_t - E_{\alpha}(At^\alpha).
\end{equation}
Then from linearity it follows that $\tilde{V}_t$ solves \footnote{Here we use the fact that $E_\alpha(At^\alpha)$ solves $D^\alpha f = Af$, see  \cite{Samko1993}. }
$
D^\alpha \tilde{V}_t = \eta(t)
$
with the initial condition $\tilde{V}_0 = 0$. The solution is given by 
$$
\tilde{V}_t = \frac{1}{\Gamma(\alpha)}\int_0^t s^{\alpha-1}\eta(t-s)ds.
$$
This motivates approximating the noise $\eta$ as 
\begin{equation}
\label{eq:eta-proxy}
    \hat\eta(t) = D^{\hat\alpha}\left[V_t - E_{\hat\alpha}(\hat{A}t^{\hat\alpha})\right].
\end{equation}
If $\hat\eta$ is not too far from the true noise $\eta$, then one can recover parameters $(b,\Theta,\sigma)$ exactly as in \cite{Vasicek2025}. More precisely, controlling the error $|\hat\eta(t) - \eta(t)|$ allows to base estimators of $(b,\Theta,\sigma)$ to $\hat\eta$ instead of $\eta$, and the growth of the error $|\hat\eta(t) - \eta(t)|$ (as time window $T$ increases required for the estimation) has to be compensated by better estimation of $\alpha$ and $A$. Our first main result, Theorem \ref{thm:a_alpha_consistency} below, provides precise error analysis on the estimation of $\alpha$ and $A$ on small-time asymptotics. Our second main theorem, Theorem \ref{thm:noise-error} below, provides quantitative error bounds in the estimation $\hat\eta$ of the noise term in terms of error in the estimation of $\alpha$ and $A$. Combined with exact rate of convergences for the estimators for $(b,\Theta,\sigma)$ provided in \cite{Vasicek2025} together with the error rates in Theorem \ref{thm:a_alpha_consistency}, one can now easily deduce how accurately $\alpha$ and $A$ has to be estimated on small-time asymptotics in order to have small enough error for $|\hat\eta-\eta|$ also for large values which is required to estimate $(b,\Theta,\sigma)$ accurately.
\begin{thm}\label{thm:a_alpha_consistency}
    Suppose that $\eta$ is almost surely H\"older continuous of order $H\in(\alpha,1)$. Then there exists a random constant $C=C_\omega$ depending only on $A,\alpha$, and $H$ such that, for all $t\in [0,1]$: 
    \begin{enumerate}
    \item We have \begin{equation}
    \label{eq:alpha-consistent}
    |\hat\alpha - \alpha|\leq C_\omega t^{\alpha}.
    \end{equation}
    In particular, $\hat\alpha \to \alpha$ almost surely as $t\to 0$. 
    \item We have
    \begin{equation}
    \label{eq:A-consistent}
    |\hat{A} -A| \leq C_\omega t^\alpha |\log t|.
    \end{equation}
    In particular, $\hat{A} \to A$ almost surely as  $t\to 0$.
    \end{enumerate}
\end{thm}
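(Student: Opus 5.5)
The plan is to expand $V_t$ near $t=0$ as a leading term of order $t^\alpha$ times a multiplicative correction $(1+R_t)$ with $|R_t|\le C_\omega t^\alpha$. Both estimators are explicit functions of $V_t$ and $V_{2t}$, so the claimed bounds then follow from elementary perturbation estimates for $\log$, $\Gamma$ and $t^{x}$. Throughout, write $C_\omega$ for a random constant that may change from line to line.

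\textbf{Step 1 (expansion of $1-V_t$).} From the solution formula \eqref{eq:sol}, split $V_t-1=\bigl(E_\alpha(At^\alpha)-1\bigr)+N_t$, where $N_t=\int_0^t s^{\alpha-1}E_{\alpha,\alpha}(s^\alpha A)\eta(t-s)\,ds$.

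For the deterministic part, the series \eqref{eq:ML} gives
\[
E_\alpha(At^\alpha)-1=\frac{At^\alpha}{\Gamma(1+\alpha)}+O(t^{2\alpha})
\]
uniformly for $t\in[0,2]$, since the tail series is bounded on compacts.

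For the stochastic part, $\eta(0)=0$ and $\eta$ is $H$-H\"older, so $|\eta(u)|\le C_\omega u^H$. Moreover $E_{\alpha,\alpha}(s^\alpha A)$ is bounded on $[0,2]$. Together these give
\[
|N_t|\le C_\omega t^H\int_0^t s^{\alpha-1}\,ds\le C_\omega t^{\alpha+H}.
\]
Since $A<0$, we can write
\[
1-V_t=\frac{|A|t^\alpha}{\Gamma(1+\alpha)}\,(1+R_t),\qquad |R_t|\le C_\omega\bigl(t^{\alpha}+t^{H}\bigr)\le C_\omega t^\alpha,
\]
where the last inequality uses $H>\alpha$ and $t\le 1$.

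\textbf{Step 2 (bound for $\hat\alpha$).} Choose a random $t_0(\omega)>0$ such that $|R_s|\le 1/2$ for all $s\le 2t_0$. For $t\le t_0$ the indicator in \eqref{eq:hatalpha} equals one, and
\[
\hat\alpha-\alpha=\frac{\log(1+R_{2t})-\log(1+R_t)}{\log 2}.
\]
Since $|\log(1+x)|\le 2|x|$ for $|x|\le 1/2$, this gives $|\hat\alpha-\alpha|\le C_\omega t^\alpha$.

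For $t\in[t_0,1]$, the bound \eqref{eq:alpha-consistent} holds after enlarging $C_\omega$ by the factor $t_0^{-\alpha}$, provided $|\hat\alpha-\alpha|$ is almost surely bounded on that range. When the indicator vanishes this is immediate, because then $|\hat\alpha-\alpha|=\alpha$. When it does not vanish, the logarithms could blow up if $1-V_s$ approaches $0$ for some $s\in[t_0,2]$. I expect this to be the main technical obstacle. To handle it, I would either read the random constant as also absorbing $\sup_{s\in[t_0,2],\,V_s<1}|\log(1-V_s)|$, or state the result for $t\le t_0(\omega)$, which is what almost sure consistency requires.

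\textbf{Step 3 (bound for $\hat A$).} Inserting Step 1 into \eqref{eq:hatA} gives
\[
\hat A=A\,\frac{\Gamma(1+\hat\alpha)}{\Gamma(1+\alpha)}\,t^{\alpha-\hat\alpha}\,(1+R_t).
\]
There are three factors to control.
\begin{itemize}
\item The first factor: $\Gamma$ is smooth and bounded away from $0$ on $[1,2]$, so $\bigl|\Gamma(1+\hat\alpha)/\Gamma(1+\alpha)-1\bigr|\le C|\hat\alpha-\alpha|\le C_\omega t^\alpha$.
\item The second factor: write $t^{\alpha-\hat\alpha}=\exp\bigl((\alpha-\hat\alpha)\log t\bigr)$. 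By Step 2 the exponent is at most $C_\omega t^\alpha|\log t|$, which is bounded on $(0,1]$. Using $|e^x-1|\le C|x|$ for bounded $x$ then gives $|t^{\alpha-\hat\alpha}-1|\le C_\omega t^\alpha|\log t|$.
\item The third factor: $|R_t|\le C_\omega t^\alpha$ by Step 1.
\end{itemize}
Multiplying the three factors, the dominant error is the logarithmic one. This yields \eqref{eq:A-consistent} for small $t$. The range $t\in[t_0,1]$ is handled as in Step 2, with the same caveat about boundedness of $\hat\alpha$ away from the origin. Consistency of both estimators as $t\to0$ follows directly.
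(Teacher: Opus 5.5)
Your proposal is correct and follows essentially the same route as the paper, which likewise writes $1-V_t=\frac{-At^\alpha}{\Gamma(1+\alpha)}-E(t)$ with $|E(t)|\le C_\omega t^{2\alpha}$ (your $R_t$, in additive rather than multiplicative form), picks a random $t_0$ so that both logarithms are defined and $|\log(1+x)|\lesssim|x|$ applies, and then controls $\hat A-A$ through smoothness of $\Gamma$ and $|t^{\alpha-\hat\alpha}-1|\le C_\omega t^\alpha|\log t|$. The paper's proof also only covers small $t$ (namely $t\le t_0/2$), and your second option is the right fix; the first cannot work, because on the event that $V$ returns to level $1$ at some time before $2$, $\hat\alpha(t)\to-\infty$ as $t$ increases to half that hitting time, so the supremum you propose to absorb is infinite, and in any case the right-hand side of \eqref{eq:A-consistent} vanishes at $t=1$.
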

\begin{proof}
In the following $C_\omega$ denotes a random constant that might depends on $A,\alpha$, $H$, and $\omega$, but is independent of $t \in [0,1]$. We write 
\begin{equation}
\label{eq:V-asymptotics}
1- V_t = \frac{-At^\alpha}{\Gamma(1+\alpha)} - E(t)
\end{equation}
with
$$
E(t) = E_\alpha(t^\alpha A) - \frac{At^\alpha}{\Gamma(1+\alpha)} + \int_0^t s^{\alpha-1}E_{\alpha,\alpha}(s^\alpha A)\eta(t-s)ds.
$$
We first prove an upper bound for $E(t)$. For this, we get from \eqref{eq:ML} that
\begin{equation}
\label{eq:deterministic-error}
\left|E_\alpha(t^\alpha A)-1 - \frac{At^\alpha}{\Gamma(1+\alpha)}\right| \leq Ct^{2\alpha}.
\end{equation}
For the random part, H\"older continuity of $\eta$ implies 
\begin{align*}
&\left\vert\int_0^t s^{\alpha-1}E_{\alpha,\alpha}(s^\alpha A)\eta(t-s)ds \right\vert \leq C_\omega \int_0^t s^{\alpha-1}(t-s)^Hds \\
& \leq C_\omega t^{H+\alpha} \leq C_\omega t^{2\alpha}.
\end{align*}
Combining the above estimates gives us 
\begin{equation}
\label{eq:E_bound}
|E(t)|\leq (C + C_\omega)t^{2\alpha}.
\end{equation}
This in particular implies that there exists $t_0=t_0(\omega)$ such that for all $t\leq t_0$ we have 
\begin{equation}
\label{eq:E_bound-chosen}
|E(t)| \leq \frac{-At^\alpha}{2\Gamma(1+\alpha)},
\end{equation}
which in turn implies 
$$
1-V_t = \left[\frac{-A}{\Gamma(1+\alpha)} - \frac{E(t)}{t^\alpha}\right]t^\alpha > 0.
$$
Consequently, for $t\leq \frac{t_0}{2}$ we have, almost surely, that 
$$
\hat\alpha = \frac{\log(1-V_{2t}) - \log(1-V_t)}{\log 2}.
$$
Note that here 
\begin{align*}
\log (1-V_t) &= \log\left[\frac{-At^\alpha}{\Gamma(1+\alpha)} - E(t)\right] \\
& = \log\left[\frac{-A}{\Gamma(1+\alpha)}\right] + \alpha \log t + \log\left[1+\frac{\Gamma(1+\alpha)E(t)}{At^\alpha}\right].
\end{align*}
Similarly we obtain 
\begin{align*}
\log (1-V_{2t}) &= \log\left[\frac{-A(2t)^\alpha}{\Gamma(1+\alpha)} - E(2t)\right] \\
& = \log\left[\frac{-A}{\Gamma(1+\alpha)}\right] + \alpha \log 2 + \alpha \log t + \log\left[1+\frac{\Gamma(1+\alpha)E(2t)}{A(2t)^\alpha}\right]
\end{align*}
leading to
\begin{align*}
    |\hat\alpha - \alpha| &= \left|\frac{\log(1-V_{2t}) - \log(1-V_t)}{\log 2} - \alpha\right| \\
    &=\left|\log\left[1+\frac{\Gamma(1+\alpha)E(2t)}{A(2t)^\alpha}\right] - \log\left[1+\frac{\Gamma(1+\alpha)E(t)}{At^\alpha}\right]\right|\\
    &\leq \left|\log\left[1+\frac{\Gamma(1+\alpha)E(2t)}{A(2t)^\alpha}\right]\right| + \left|\log\left[1+\frac{\Gamma(1+\alpha)E(t)}{At^\alpha}\right]\right|.
\end{align*}
Now \eqref{eq:E_bound-chosen} and $t\leq \frac{t_0}{2}$ allows us to use $|\log(1+x)|\leq \frac32|x|$ for $|x|\leq \frac12$. This together with \eqref{eq:E_bound} gives an estimate
$$
|\hat\alpha - \alpha| \leq C_\omega t^{\alpha}
$$
proving \eqref{eq:alpha-consistent}. For \eqref{eq:A-consistent}, we use \eqref{eq:V-asymptotics} and \eqref{eq:hatA} to obtain
\begin{align*}
    \hat{A} - A &= \left[\frac{\Gamma(1+\hat\alpha)t^\alpha}{\Gamma(1+\alpha)t^{\hat\alpha}} - 1\right]A + \frac{\Gamma(1+\hat\alpha)}{t^{\hat\alpha}}E(t)\\
    &= \left[\frac{\Gamma(1+\hat\alpha)}{\Gamma(1+\alpha)} - 1\right]t^{\alpha-\hat\alpha}A + \left[t^{\alpha-\hat\alpha} - 1\right]A + \frac{\Gamma(1+\hat\alpha)}{t^{\hat\alpha}}E(t).
\end{align*}
We first analyze $t^{\alpha-\hat\alpha}$ and provide rate at which it converges to a constant $1$. Using \eqref{eq:alpha-consistent} allows to estimate 
\begin{align*}
   \left| t^{\alpha-\hat\alpha} - 1\right| &= \left|e^{(\alpha-\hat\alpha) \log t}-1\right| \leq e^{|\alpha-\hat\alpha||\log t|} - 1 \leq e^{C_\omega t^\alpha |\log t|} - 1 \\
   &\leq C_\omega t^\alpha |\log t|e^{C_\omega t^\alpha |\log t|}\leq C_\omega t^\alpha |\log t|.
\end{align*}
Using also smoothness of $\Gamma(1+x)$ around $x=\alpha$ and \eqref{eq:E_bound} yields immediately for the last two terms that 
\begin{align*}
    &\left[t^{\alpha-\hat\alpha} - 1\right]A + \frac{\Gamma(1+\hat\alpha)}{t^{\hat\alpha}}E(t)  \leq C_\omega t^\alpha |\log t| + C_\omega t^{\alpha}t^{\alpha-\hat\alpha}\leq C_\omega t^\alpha |\log t|.
\end{align*}
Similarly, using smoothness of the Gamma function $\Gamma(x)$ we deduce 
\begin{align*}
    &\left[\frac{\Gamma(1+\hat\alpha)}{\Gamma(1+\alpha)} - 1\right]t^{\alpha-\hat\alpha}A\leq C_\omega |\alpha - \hat\alpha|\leq C_\omega t^\alpha.
\end{align*}
Combining all the estimates leads to \eqref{eq:A-consistent} and completes the whole proof.
\end{proof}
By Theorem \ref{thm:a_alpha_consistency} we can recover $\alpha$ and $A$ with arbitrary precision by zooming in to short-time asymptotics. In order to estimate $(b,\Theta,\sigma)$, note that $\sigma$ can also be estimated with arbitrary precision on short time asymptotics if $\eta$ can be recovered, see \cite{Vasicek2025}. The estimation of $(b,\Theta)$ on the other hand is based on long periods $[0,T]$ with $T\to \infty$ allowing accurate estimation of the mean and covariances of an (asymptotically) stationary series $\eta$. For this purpose one also needs the control of the error $|\hat\eta-\eta|$ that is the topic of the next theorem. 

In the sequel, we use notation $f\lesssim g$ if $|f|\leq C|g|$ for some unimportant constant $C$ independent of $T$, but possibly depending on true parameters $(\alpha,A,b,\Theta,\sigma)$. We also note that, as time interval $[0,1]$ is dedicated to the estimation of parameters $(\alpha,A)$, the noise recovery is considered after time period $[0,1]$  in the following.

\begin{thm}
\label{thm:noise-error}
Suppose that $\eta$ is almost surely H\"older continuous of order $H\in(\alpha,1)$ and that $A<0$. Let $\hat\eta$ be given by \eqref{eq:eta-proxy}. Then for any fixed $T>1$ we have 
    $$
    \sup_{1\leq t\leq T}|\hat{\eta}(t) - \eta(t)| \lesssim \max(1,C^\eta_T)T^{H-\alpha}\log T |\alpha - \hat\alpha| + |A - \hat{A}|,
    $$
    where $C^\eta_T$ is the H\"older constant of $\eta$ on $[0,T]$.
\end{thm}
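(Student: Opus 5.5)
Our plan is to decompose $\hat\eta - \eta$ into a part due to the error in $\alpha$ and a part due to the error in $A$. Write $\tilde V_t = V_t - E_\alpha(At^\alpha)$, so that $\tilde V_t = \frac{1}{\Gamma(\alpha)}\int_0^t s^{\alpha-1}\eta(t-s)\,ds = I^\alpha\eta(t)$ and $D^\alpha\tilde V = \eta$. Then
\begin{align*}
\hat\eta(t) - \eta(t) &= \left(D^{\hat\alpha} - D^{\alpha}\right)\tilde V_t + D^{\hat\alpha}\left[E_\alpha(At^\alpha) - E_{\hat\alpha}(\hat A t^{\hat\alpha})\right].
\end{align*}
The first term is the noise-dependent part and should produce the factor $\max(1,C^\eta_T)T^{H-\alpha}\log T\,|\alpha-\hat\alpha|$. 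The second term is deterministic given $(\hat\alpha,\hat A)$ and should produce $|A-\hat A|$, plus possibly a contribution of order $|\alpha-\hat\alpha|$ that can be absorbed into the first.

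For the first term we would use the semigroup property. Since $\tilde V = I^\alpha\eta$ with $\eta(0)=0$, we have $D^{\hat\alpha}I^{\alpha}\eta = I^{\alpha-\hat\alpha}\eta$ when $\hat\alpha\le\alpha$, and $=D^{\hat\alpha-\alpha}\eta$ (Marchaud/Caputo form) when $\hat\alpha>\alpha$. The latter is well defined because $H>\alpha\ge\hat\alpha-\alpha$ once $|\hat\alpha-\alpha|$ is small, which Theorem \ref{thm:a_alpha_consistency} allows us to assume. We therefore need to bound $I^{\beta}\eta(t)-\eta(t)$ and $D^{\beta}\eta(t)-\eta(t)$ for $\beta=|\alpha-\hat\alpha|$ small, uniformly over $t\le T$. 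We would write $\eta(t)$ as $\eta(t)-\eta(t-s)$ plus $\eta(t)$ and use $|\eta(t)-\eta(t-s)|\le C^\eta_T s^H$. Differentiating the kernels $s^{\beta-1}/\Gamma(\beta)$ (respectively $s^{-\beta-1}/\Gamma(-\beta)$) in $\beta$ at $\beta=0$ produces factors like $\beta\log s$. Integrating against $s^{H}$ over $[0,t]$ with $t\le T$ gives $\beta\,C^\eta_T T^{H}\log T$-type bounds.

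The sharper exponent $T^{H-\alpha}$ in the statement would come from not splitting off $I^\alpha$ but working directly with the difference of the kernels $\frac{s^{-\hat\alpha}}{\Gamma(1-\hat\alpha)}-\frac{s^{-\alpha}}{\Gamma(1-\alpha)}$ acting on $\tilde V'$. Bounding by the mean value theorem in the exponent gives $|\alpha-\hat\alpha|\,s^{-\alpha}(1+|\log s|)$ up to constants. We would use Hölder regularity of $\eta$ to control $\tilde V'$ near the singularity. The pieces involving $\eta(t)$ itself (not increments) are bounded by $\sup|\eta|$ on $[0,T]$, which is bounded by $C^\eta_T T^H$ since $\eta(0)=0$; this contributes the $\max(1,\cdot)$. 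Matching exactly the exponent $H-\alpha$ is the main obstacle. We would first try splitting the integral at $s=1$: near $0$ we use Hölder increments, and on $[1,t]$ we use the decay $s^{-\alpha}$ of the kernel, with the $\log$ from $|\log s|\le\log T$.

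For the deterministic term we would use $D^{\hat\alpha}E_{\hat\alpha}(\hat A t^{\hat\alpha}) = \hat A E_{\hat\alpha}(\hat At^{\hat\alpha})$ to rewrite it as
\[
D^{\hat\alpha}E_\alpha(At^\alpha) - A E_\alpha(At^\alpha) + AE_\alpha(At^\alpha) - \hat A E_{\hat\alpha}(\hat A t^{\hat\alpha}).
\]
Since $A,\hat A<0$, the functions $E_\alpha(At^\alpha)$ are bounded by $1$ and completely monotone. This gives $|AE_\alpha(At^\alpha)-\hat AE_{\hat\alpha}(\hat At^{\hat\alpha})|\lesssim |A-\hat A| + |\alpha-\hat\alpha|\log T$, using smoothness of $E_{\alpha}$ in $\alpha$ and in its argument on the negative half-line. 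The term $D^{\hat\alpha}E_\alpha(A\cdot^\alpha)-D^\alpha E_\alpha(A\cdot^\alpha)$ is handled exactly like the noise term, with $\eta$ replaced by a bounded smooth function, giving $|\alpha-\hat\alpha|\log T$. Collecting the estimates yields the claimed bound.
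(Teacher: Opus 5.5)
Your split of $\hat\eta-\eta$ into $(D^{\hat\alpha}-D^\alpha)\tilde V$ plus a deterministic Mittag--Leffler part is the same as the paper's. Your treatment of the deterministic part is essentially the paper's $J_1$--$J_4$ step; the paper computes $D^{\hat\alpha}E_\alpha(At^\alpha)=At^{\alpha-\hat\alpha}E_{\alpha,\alpha+1-\hat\alpha}(At^\alpha)$ in closed form rather than treating $E_\alpha(At^\alpha)$, which is not smooth at $0$, ``like the noise term''.

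The genuine gap is the noise part. The factor $T^{H-\alpha}$ is the real content of the theorem, and you leave it open as ``the main obstacle''. Within your framework it cannot be obtained at all. With $\tilde V=I^\alpha\eta$ and $\hat\alpha=\alpha-\beta$, your semigroup identity gives $(D^{\hat\alpha}-D^\alpha)\tilde V=I^\beta\eta-\eta$. For the path $\eta(t)=t^H$ (vanishing at $0$, H\"older constant $1$ on every $[0,T]$), at $t=T$ this equals $T^H\bigl[\frac{\Gamma(1+H)}{\Gamma(1+H+\beta)}T^\beta-1\bigr]\approx\beta\,T^H(\log T-\psi(1+H))$, with $\psi$ the digamma function. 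That is a factor $T^\alpha$ larger than $C^\eta_T T^{H-\alpha}\log T\,\beta$. Your fallback has the same defect: every piece you bound through $\sup_{[0,T]}|\eta|\le C^\eta_T T^H$ carries $T^H$. Moreover, $\tilde V'$ need not exist when $H+\alpha\le 1$.

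What is missing is the correct form of $\tilde V$. Subtracting $D^\alpha E_\alpha(At^\alpha)=AE_\alpha(At^\alpha)$ from the equation gives $D^\alpha\tilde V=A\tilde V+\eta$, not $\eta$. Hence $\tilde V_t=f(t)=\int_0^t s^{\alpha-1}E_{\alpha,\alpha}(As^\alpha)\eta(t-s)\,ds$, as in \eqref{eq:sol}, and not $I^\alpha\eta$; the remark before the theorem is wrong on this point and you inherited it. The paper's proof works with this $f$.

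For $A<0$ the kernel $k(s)=s^{\alpha-1}E_{\alpha,\alpha}(As^\alpha)$ is nonnegative with $\int_0^\infty k=1/|A|$. By contrast, the kernel of $I^\alpha$ has mass $T^\alpha/\Gamma(1+\alpha)$ on $[0,T]$. Together with $\eta(0)=0$, this gives $|f(t)-f(s)|\le 2|A|^{-1}C^\eta_T|t-s|^H$ on $[0,T]$ uniformly in $T$, with $f(0)=0$. This is what makes the paper's claim that $f$ inherits the H\"older constant of $\eta$ work without growth in $T$. The Marchaud form of $D^\alpha f$ then involves only increments of $f$ against $t^{-\alpha}$ and $(t-s)^{-1-\alpha}$, so $|\hat D^\alpha f(t)|\lesssim C^\eta_T t^{H-\alpha}$. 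Perturbing the exponent in these kernels produces the extra factor $|\alpha-\hat\alpha|\log T$, and no $\sup|\eta|$ term ever appears.

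Be aware that the paper's first display makes the same identification $\eta=D^\alpha\tilde V$ as your first line. In both arguments, $\hat\eta-\eta$ therefore contains an extra term $A\tilde V_t$, which does not vanish as $(\hat\alpha,\hat A)\to(\alpha,A)$. What the argument actually bounds is $\hat\eta-D^\alpha\tilde V$. To bound $\hat\eta-\eta$ itself, one must subtract $\hat A\bigl(V_t-E_{\hat\alpha}(\hat A t^{\hat\alpha})\bigr)$ from $\hat\eta$, at the cost of an additional term of order $|A-\hat A|\sup_{[0,T]}|\eta|$.
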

\begin{proof}
Starting from the elementary computations
$$
D^\mu E_{\alpha}(A t^\alpha) = At^{\alpha-\mu}E_{\alpha,\alpha+1-\mu}(A t^\alpha)
$$
we obtain
\begin{align}\label{etaapprox}
    \hat{\eta}(t) - \eta(t) &= (D^{\hat\alpha} - D^\alpha)V_t + D^{\alpha}E_\alpha(At^\alpha) - D^{\hat\alpha}E_{\hat\alpha}(\hat{A}t^{\hat\alpha}) \nonumber \\
    &= (D^{\hat\alpha} - D^\alpha)V_t + AE_\alpha(At^\alpha) - \hat{A}E_{\hat\alpha}(\hat{A}t^{\hat\alpha})\nonumber \\
    &=[At^{\alpha-\hat\alpha}E_{\alpha,\alpha+1-\hat\alpha}(A t^\alpha) - \hat{A}E_{\hat\alpha}(\hat{A}t^{\hat\alpha})]+ (D^{\hat\alpha} - D^\alpha)f(t)
\end{align}
with 
\begin{equation*}
f(t) = \int_0^t (t-s)^{\alpha-1}E_{\alpha,\alpha}(A(t-s)^\alpha)\eta(s)ds.
\end{equation*}
Now standard regularity properties of
fractional integral operators imply that $f$ inherits the H\"older regularity of $\eta$. Indeed, in other words,  if $\eta$ possesses $H$-H\"older continuous trajectories on compact intervals, then standard regularity properties of fractional convolution operators imply that $f$ is also $H$-H\"older continuous on compact intervals, with H\"older constant controlled by that of $\eta$.

We begin with the second term in \eqref{etaapprox} that is trickier. Since $f$ is H\"older continuous, we have 
\begin{equation*}
D^\alpha f(t) = \frac{1}{\Gamma(1-\alpha)}\frac{f(t)-f(0)}{t^\alpha} + \frac{\alpha}{\Gamma(1-\alpha)}\int_0^t \frac{f(t)-f(s)}{(t-s)^{1+\alpha}}ds.
\end{equation*}
Set 
$$
\hat{D}^\alpha f(t) = \frac{f(t)-f(0)}{t^\alpha} + \int_0^t \frac{f(t)-f(s)}{(t-s)^{1+\alpha}}ds.
$$
Using smoothness of the Gamma function $\Gamma(x)$ around true $\alpha \in (0,1)$ we obtain 
\begin{align*}
& |(D^{\hat\alpha} - D^\alpha)f(t)| \\
&\leq  \left|\frac{1}{\Gamma(1-\hat\alpha)} - \frac{1}{\Gamma(1-\alpha)}\right|\frac{|f(t)-f(0)|}{t^\alpha} + \frac{1}{\Gamma(1-\alpha)}|\hat{D}^\alpha f(t) - \hat{D}^{\hat\alpha} f(t)|\\
&+  \left|\frac{\hat\alpha}{\Gamma(1-\hat\alpha)} - \frac{\alpha}{\Gamma(1-\alpha)}\right||\hat{D}^\alpha f(t) - \hat{D}^{\hat\alpha} f(t)|\\
&\lesssim |\hat\alpha - \alpha||\hat{D}^\alpha f(t)| + |\hat{D}^\alpha f(t) - \hat{D}^{\hat\alpha} f(t)|.
\end{align*}
Here 
$$
|\hat{D}^\alpha f(t)| \lesssim C^\eta_T T^{H-\alpha}
$$
handling the first term. For the second term, applying H\"older continuity together with 
$$
\left\vert\frac{1}{t^{\alpha}} - \frac{1}{t^{\hat\alpha}}\right\vert \lesssim \frac{\log t}{t^\alpha} |\alpha - \hat\alpha|
$$
yields
$$
|(D^{\hat\alpha} - D^\alpha)f(t)| \lesssim C_T^\eta \log T |\alpha-\hat\alpha| T^{H-\alpha}.
$$
It remains to study the first term in square brackets in \eqref{etaapprox}, i.e.
\begin{align*}
& At^{\alpha-\hat\alpha}E_{\alpha,\alpha+1-\hat\alpha}(A t^\alpha) - \hat{A}E_{\hat\alpha}(\hat{A}t^{\hat\alpha}).
\end{align*}
We decompose it as
\begin{align*}
	&
	A t^{\alpha-\hat\alpha}
	E_{\alpha,\alpha+1-\hat\alpha}(At^\alpha)
	-
	\hat A
	E_{\hat\alpha}( \hat A t^{\hat\alpha} )
	\\
	&=
	A\Big(
	t^{\alpha-\hat\alpha}-1
	\Big)
	E_{\alpha,\alpha+1-\hat\alpha}(At^\alpha)
	\\
	&\quad
	+
	A\Big(
	E_{\alpha,\alpha+1-\hat\alpha}(At^\alpha)
	-
	E_\alpha(At^\alpha)
	\Big)
	\\
	&\quad
	+
	(A-\hat A)E_\alpha(At^\alpha)
	\\
	&\quad
	+
	\hat A
	\Big(
	E_\alpha(At^\alpha)
	-
	E_{\hat\alpha}(\hat A t^{\hat\alpha})
	\Big) \\
    &=: J_1+J_2+J_3+J_4.
\end{align*}
Since $A<0$, $\sup_{t\geq1}
|E_\alpha(At^\alpha)|
<\infty$ and hence proceeding as in the proof of Theorem \ref{thm:a_alpha_consistency} we get
$$
|J_1|\lesssim |\alpha - \hat\alpha|\log T.
$$
For $J_2$ and $J_3$, continuity properties and boundedness of the Mittag--Leffler functions for $A<0$ yields 
$
|J_2| \lesssim |\alpha-\hat\alpha|
$
and
$
|J_3| \lesssim |A- \hat{A}|.
$
Working similarly we get also
$$
|J_4| \lesssim |A - \hat{A}|+ |\alpha - \hat\alpha|
$$
and collecting all the estimates completes the proof.

\end{proof}
\begin{remark}
    The estimation of $\sigma$ now follows directly by following the approach based in quadratic variations on short intervals presented in \cite{Vasicek2025}. The estimation of $(b,\Theta)$ however is based on the estimation of the mean and covariances of a (almost) stationary series $\eta$, as discussed in \cite{Vasicek2025}. For this however, we note that the error in $|\hat\eta -\eta|$ propagates as $T$ increases, while observed series length $T$ has to increase in order to estimate mean and covariance of $\eta$ accurately. Together with Theorem \ref{thm:noise-error} and asymptotical behaviour of estimators $(\hat{b},\hat{\Theta})$ this allows to explicitly determine how accurately $(\alpha,A)$ has to be estimated (on short-time asymptotics) in order to obtain sufficient accuracy for the estimation of $(b,\Theta)$ based on large-time asymptotics.
\end{remark}

\section{Numerical illustrations}
\label{sec:simulations}
In this section we illustrate the performance of our approach in practice via simulated examples. In Section \ref{sec:simu} we describe our simulation setting, while in Section \ref{sec:estimation} we present our results.
\subsection{Simulations}
\label{sec:simu}
We adopt the strategy for simulating sample paths of $V$ from \cite[Section 3]{Pirozzi2018} (see also \cite[Section 6]{LeonenkoPirozzi2025}). Indeed, motivated by the definition \eqref{eq:Caputo}, for given discretization mesh with interval length $\Delta t$ one sets for $t=N \Delta t$ 
\begin{equation}\label{approxCaputo}
	D^\alpha V(t) \approx \frac{(\Delta t) ^{-\alpha}}{\Gamma(2-\alpha)} \sum_{k=0}^{N-1}[V((k+1)\Delta t)-V(k\Delta t)][(N-k)^{1-\alpha}-(N-k-1)^{1-\alpha}].
\end{equation}
In our simulations we consider the noise $G$ in the noise dynamics of \eqref{Gmod} to be the fractional Brownian motion (fBm) $B^H$ with Hurst index $H$.That is, $B^H$ is a centered Gaussian process with $B^H_0=0$ and covariance 
$$
R(s,t) = \frac{1}{2}\left[t^{2H} + s^{2H} - |t-s|^{2H}\right].
$$
Within this noise, the process $V$ can be simulated through Algorithm 1. Figure \ref{fig:paths_different} shows examples of simulated paths of $V$ for different values of the parameters $A$ and $\Theta$.

\begin{algorithm}
\label{algorithm:1}
\caption{Simulation of the fractional neuronal model}
\begin{algorithmic}[1]
\item \textbf{Initialize} the parameters $\alpha, A, b, \Theta, \sigma, H$, 
    the initial value $V_0$, the step size $\Delta t$, the maximum number 
    of time steps $N$, and the number of paths $M$.

\item Set the seed of the random number generator.

\item  \textbf{Simulate the OU process} $\eta(t)$ via an Euler--Maruyama 
    discretization scheme. At time instants 
    $0 = t_0 < t_{\Delta t} < t_{2\Delta t} < \cdots < t_{N\Delta t}$, 
    simulate a fBm $B^H(t)$ and construct the path 
    of $\eta(t)$ by setting $\eta(0) = 0$, $B^H(0) = 0$, and
    \begin{equation*}
        \eta(n\Delta t) = \eta((n-1)\Delta t) 
        +(b- \Theta)\,\eta((n-1)\Delta t)\,\Delta t 
        + \sigma\!\left(B^H(n\Delta t) - B^H((n-1)\Delta t)\right)
    \end{equation*}
    for $n = 2, \ldots, N$.

\item \textbf{Construct the path of} $V(t)$ at the same time instants 
    by setting $V(0) = V_0$,
    \begin{equation*}
        V(\Delta t) = V(0) 
        + \frac{(\Delta t)^{\alpha}\,\Gamma(2-\alpha)}{2}
        \left(A\,V(0) + b + \eta(\Delta t)\right),
    \end{equation*}
    and applying the iterative scheme
    \begin{align*}
        V(n\Delta t) 
        &= V((n-1)\Delta t) \\
        &\quad - \sum_{k=0}^{n-2}\bigl[V((k+1)\Delta t) - V(k\Delta t)\bigr]
          \bigl[(n-k)^{1-\alpha} - (n-k-1)^{1-\alpha}\bigr] \\
        &\quad + (\Delta t)^{\alpha}\,\Gamma(2-\alpha)
          \left(A\,V((n-1)\Delta t) + b + \eta(n\Delta t)\right)
    \end{align*}
    for $n = 2, \ldots, N$.

\item Repeat Steps 2--4 a total of $M$ times, setting a new seed for the 
    random number generator at each repetition.
\end{algorithmic}
\end{algorithm}

\begin{figure}[h!]
    \begin{subfigure}{0.495\textwidth}
        \centering
        \includegraphics[width=\textwidth]{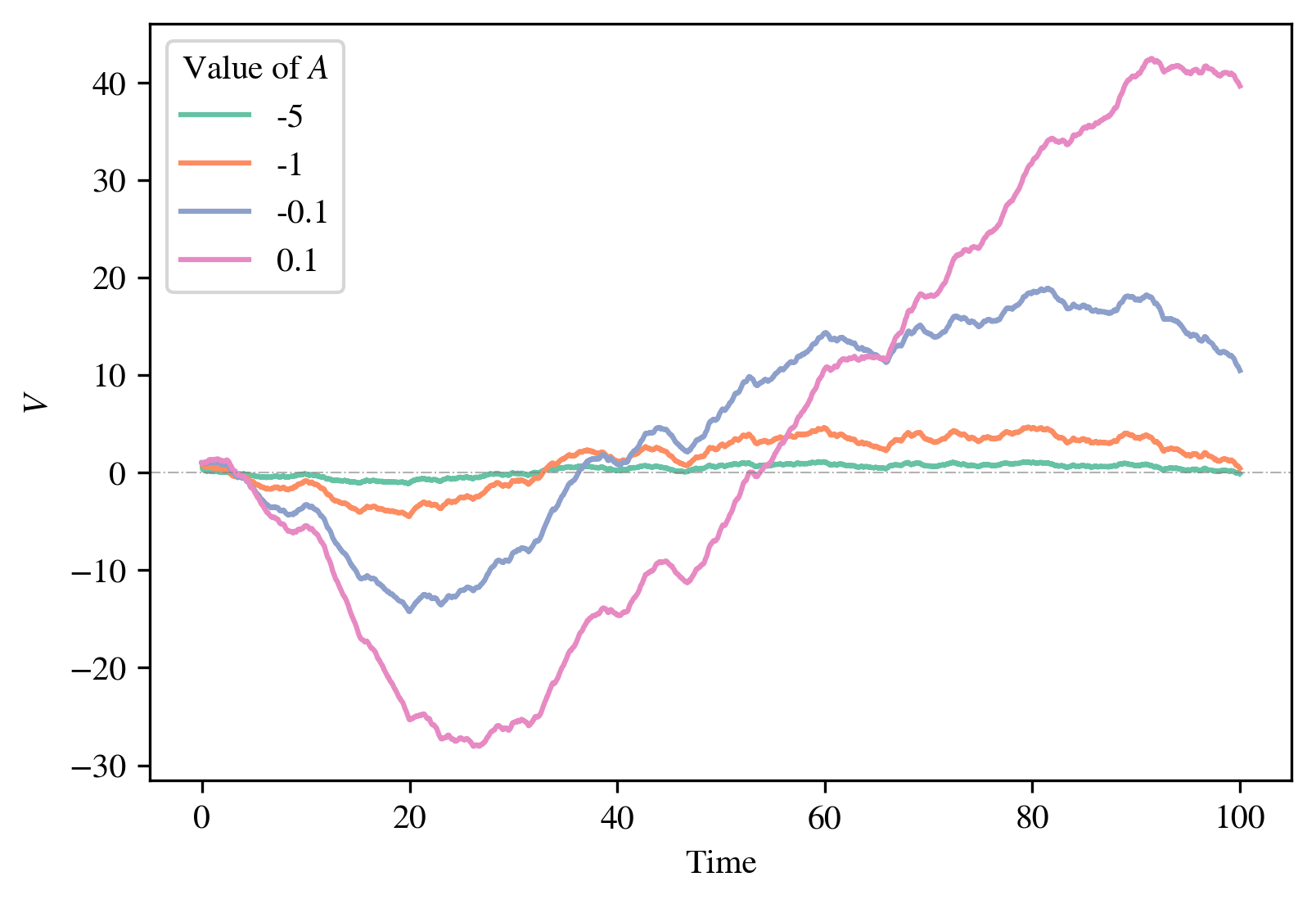}
        \caption{$\Theta = 0.1$.}
        \label{fig:path_theta_0.1}
        \end{subfigure}
        \hfill
    \begin{subfigure}{0.495\textwidth}
        \centering
        \includegraphics[width=\textwidth]{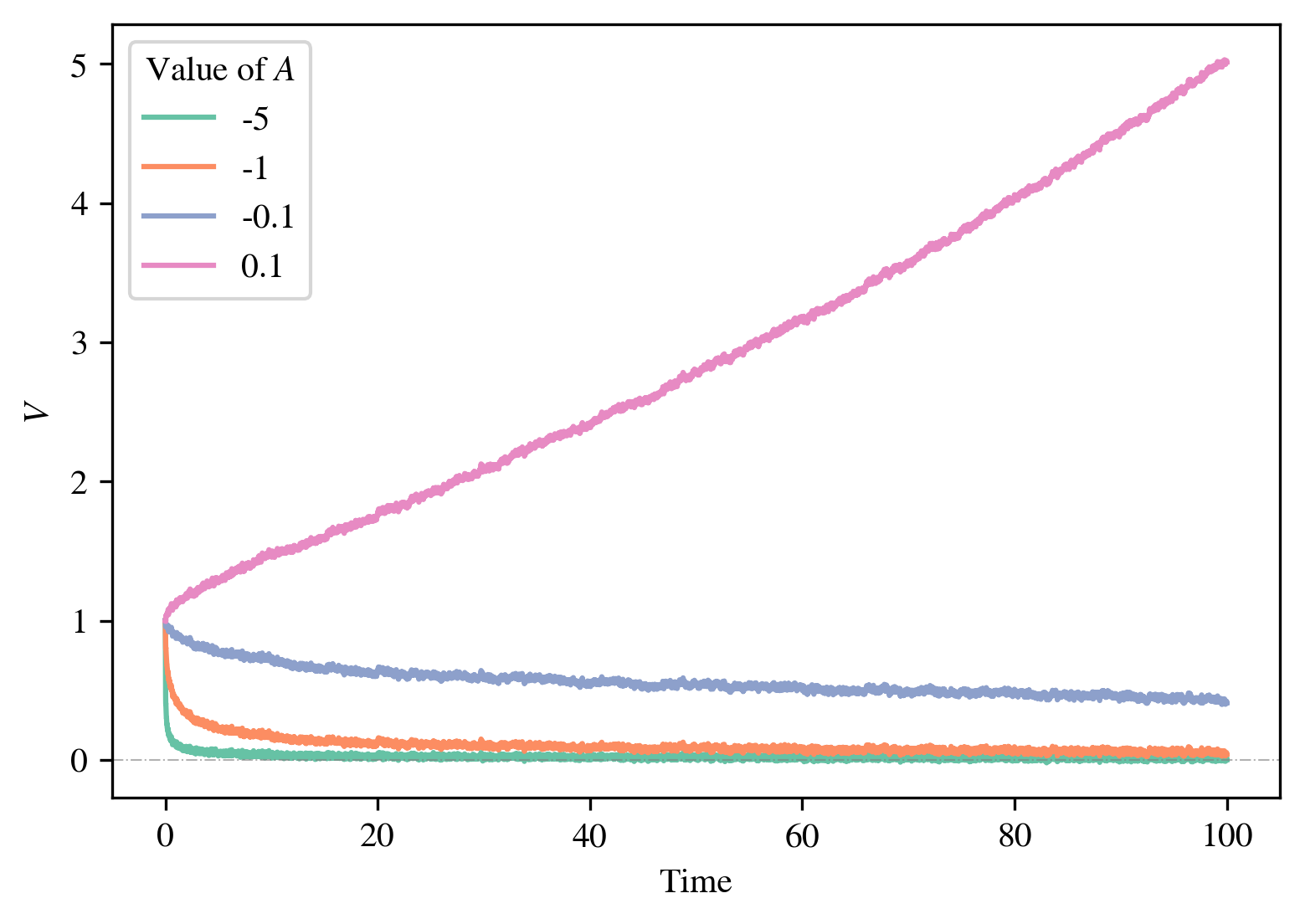}
        \caption{$\Theta = 100$.}
        \label{fig:path_theta_100}
    \end{subfigure}
    \caption{Simulated paths of $V$ for different values of $A$ and $\Theta$.}
    \label{fig:paths_different}
\end{figure}

\subsection{Estimation}
\label{sec:estimation}
We present simulated examples to illustrate the performance and applicability of the proposed estimators.
The simulations were performed using computer resources within the Aalto University School of Science \lq\lq Science-IT\rq\rq\ project. For simplicity, we set $b=0$ and focused on the estimation of $(\alpha,A,\Theta,\sigma)$.

For the estimation of $\alpha$ and $A$, many observations very close to 0 are required, whereas the estimation of $\Theta$ and $\sigma$ requires observations within a long time period.
Due to computational constraints, it is not feasible to work with a very long time series with extremely dense observations, but in practice, one would sample more densely during a short interval in the beginning and then collect a longer time series with less dense observations.
To mimic this practical approach, we simulated two separate paths of $V$ for each sample: one on the time interval $[0, 0.001]$ with 10000 observations, and another on the time interval $[0, 400]$ with 10000 observations.
The first part exhibits asymptotic behavior close to 0 is used to estimate $\alpha$ and $A$, and the second part shows long-term behavior and is used to estimate $\Theta$ and $\sigma$.

In total, 400 samples were simulated for each example.
The simulations were done using the approach presented in Section \ref{sec:simu} and the simulation parameters are given in Table \ref{table:params}.
In the table, $V_0$ denotes the initial value of the process $V$.
The process $G$ in the noise process $\eta$ was a fractional Brownian motion with Hurst index $H$.
In the examples, the values of $\alpha$ and $H$ were varied while other parameters were kept constant.

\begin{table}[h]
\caption{Parameter values for simulations.}
\label{table:params}
\begin{tabular}{cc|cc|cc}
Parameter & Value & Parameter  & Value & Parameter  & Value \\ \hline
$A$       & -0.1   & $\Theta$   & 5     & T          & 400     \\
$b$       & 0     & $\sigma$   & 1   & n          & 10000  \\
$\alpha$  & \{0.5, 0.7\}   & $H$        & \{0.5, 0.6, 0.7, 0.8\}   & $V_0$      & 1
\end{tabular}
\end{table}

The parameters $\alpha$ and $A$ were estimated from the first part of each sample using the regression method from Remark \ref{rmk:regression} and \eqref{eq:hatA}.
After estimating $\alpha$ and $A$, the noise process $\eta$ was recovered from the longer time series, and the parameters $\Theta$ and $\sigma$ were estimated using the methods of \cite{Vasicek2025}.

The case $\alpha = 0.5$ was simulated with two different noise processes: standard Brownian motion ($H=0.5$) and fBm with Hurst index 0.6.
Figure \ref{fig:alpha_A_hist_0.5} displays the differences of the estimates $\hat \alpha$ and $\hat A$ from their true values as histograms in the case 
The first column of the figure corresponds to the case when $H=0.5$ and the second column to $H=0.6$.
Based on the histograms, it seems that the parameters $\alpha$ and $A$ could be well estimated from the simulated data with both noise processes.
All histograms are centered at 0, but for larger $H$, the histograms appear wider.
This is due to the differences in the correlation structures of $\eta$ with different Hurst indices.

\begin{figure}[ht]
     \centering
     \begin{subfigure}{0.9\textwidth}
         \centering
         \includegraphics[width=\textwidth]{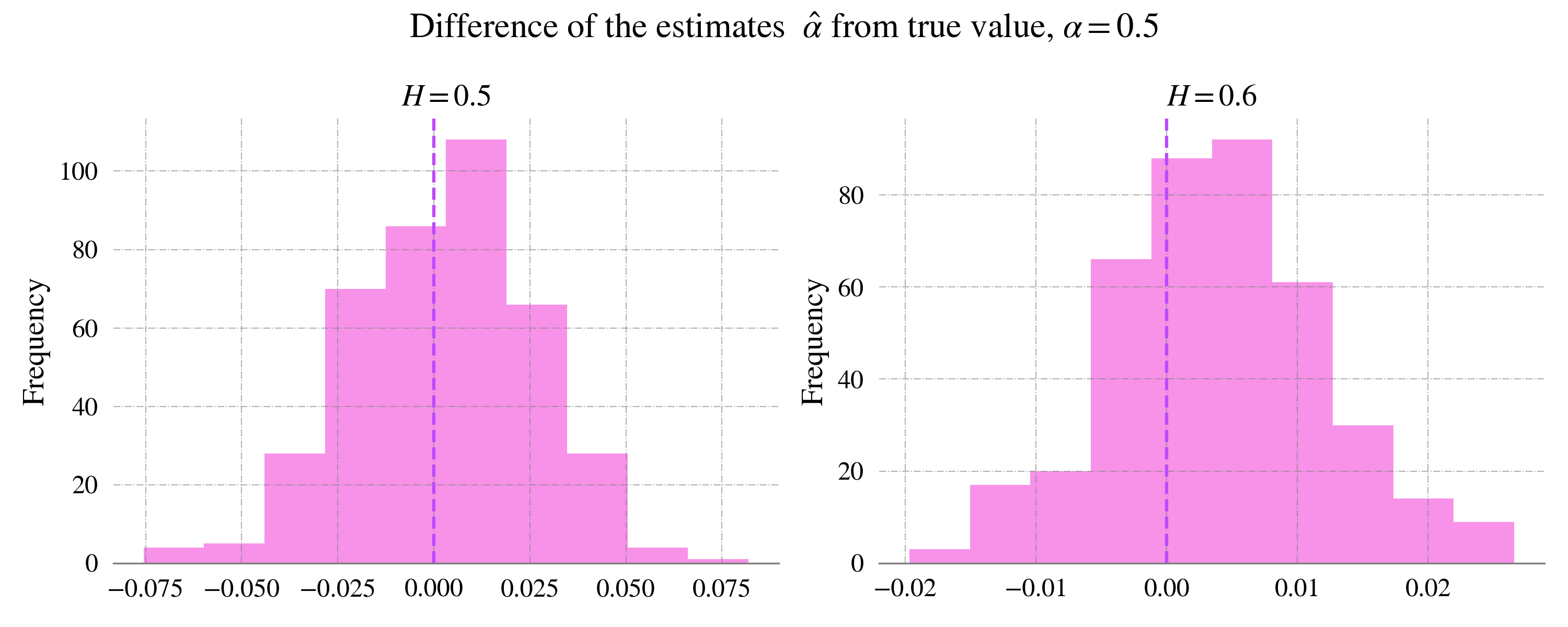}
         \caption{Parameter $\alpha$.}
     \end{subfigure}
     \hfill
     \begin{subfigure}{0.9\textwidth}
         \centering
         \includegraphics[width=\textwidth]{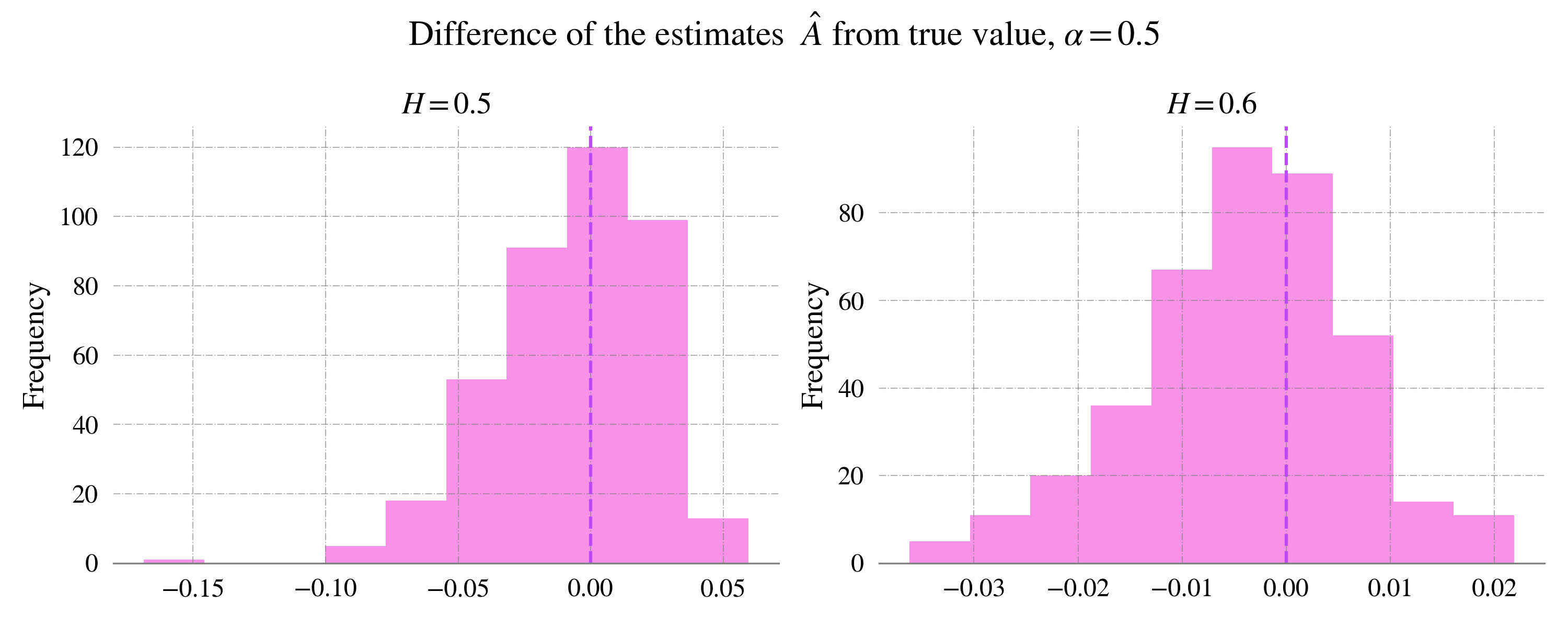}
         \caption{Parameter $A$.}
     \end{subfigure}
        \caption{Differences between parameter estimates and true values of the parameters $\alpha$ and $A$ for different values of $H$ when $\alpha=0.5$.}
        \label{fig:alpha_A_hist_0.5}
\end{figure}

Figure \ref{fig:theta_hist_0.5} shows the differences of the estimates $\hat \Theta$ computed from the recovered noise $\hat\eta$ and the true value $\Theta$ as histograms for the case $\alpha=0.5$.
For comparison, the parameter $\Theta$ was also estimated from the true noise $\eta$, yielding estimates $\hat \Theta_{\text{true noise}}$ for each sample.
The figure also shows the difference of the estimates $\hat \Theta_{\text{true noise}}$ from the true parameter $\Theta$ as histograms.
Based on the histograms, $\Theta$ could be well estimated from the data.
The histograms corresponding to the estimates from recovered noise (left column of the figure) are slightly wider than the histograms corresponding to true noise, especially when $H=0.6$.
However, it seems that the noise recovery works sufficiently well and does not make too large an error.
The parameter $\sigma$ could be estimated equally well from the data, as can be seen in Figure \ref{fig:sigma_hist_0.5}, which shows the differences between the estimates of $\sigma$ and the true value of the parameter in the case $\alpha=0.5$.
The estimates for $\sigma$ were computed from $\hat\eta$.

\begin{figure}[h!]
\includegraphics[width=0.9\textwidth]{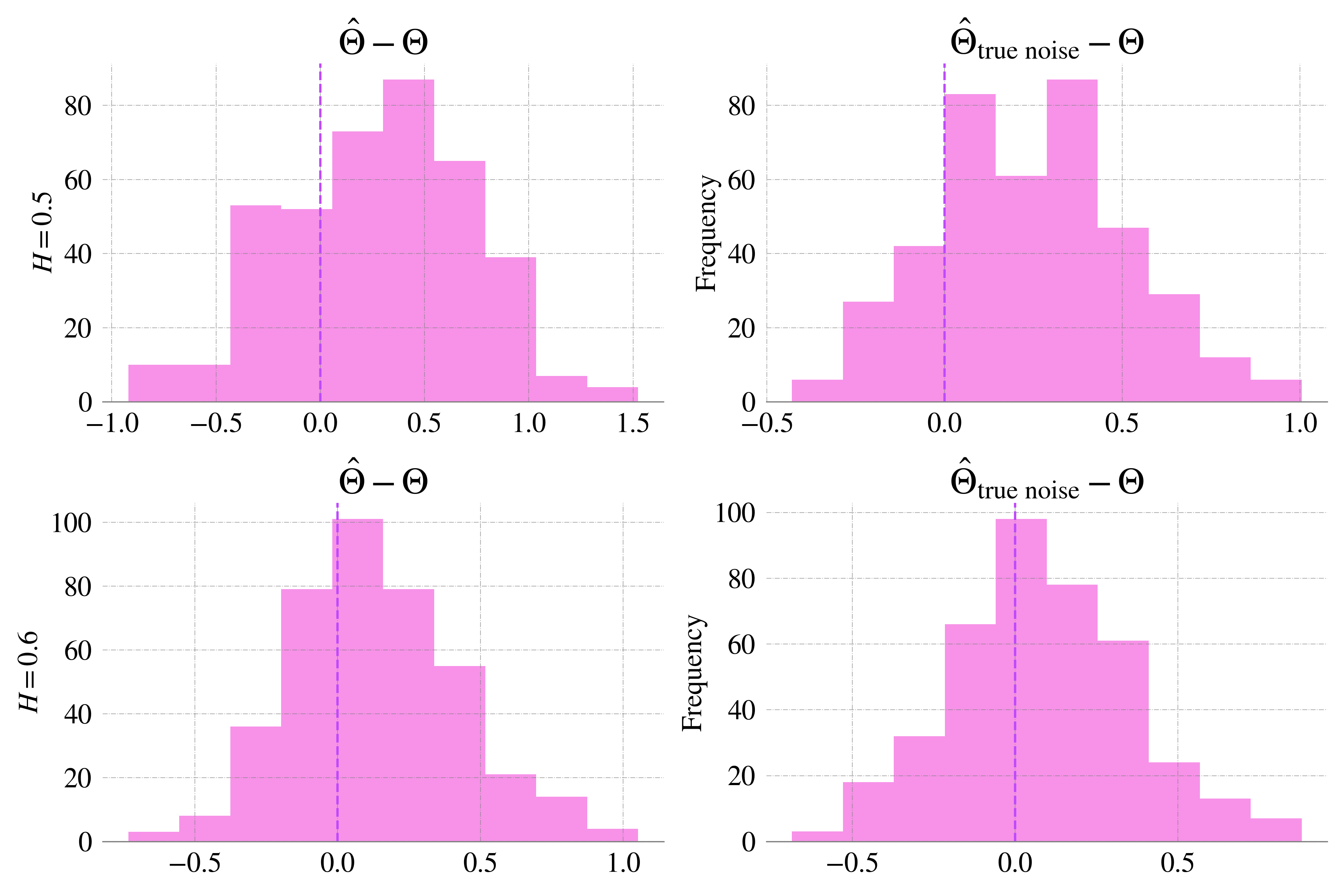}
\caption{Differences between the estimates $\hat \Theta$ from the true parameter and the estimates $\hat \Theta_{\text{true noise}}$ calculated from the real data in the case $\alpha=0.5$.}
\label{fig:theta_hist_0.5}
\end{figure}

For the case $\alpha = 0.7$, simulations were run with two different fractional Brownian motions as the noise process, corresponding to Hurst indices $H=0.7$ and $H=0.8$.
Figure \ref{fig:alpha_A_hist_0.7} displays the differences between the estimates of $\alpha$ and $A$ from their true values for different values of $H$ when $\alpha=0.7$.
The histograms seem to be slightly shifted from 0, especially in the case $H=0.8$.
However, the scale of the horizontal axis of the histograms is, for both $H=0.7$ and $H=0.8$, considerably smaller than in the previous examples with $\alpha=0.5$.
Therefore, it seems that as $\alpha$ increases, the estimates of $\alpha$ and $A$ get more accurate, even though there appears to be a slight bias in the estimates.

\begin{figure}[ht]
     \centering
     \begin{subfigure}{0.9\textwidth}
         \centering
         \includegraphics[width=\textwidth]{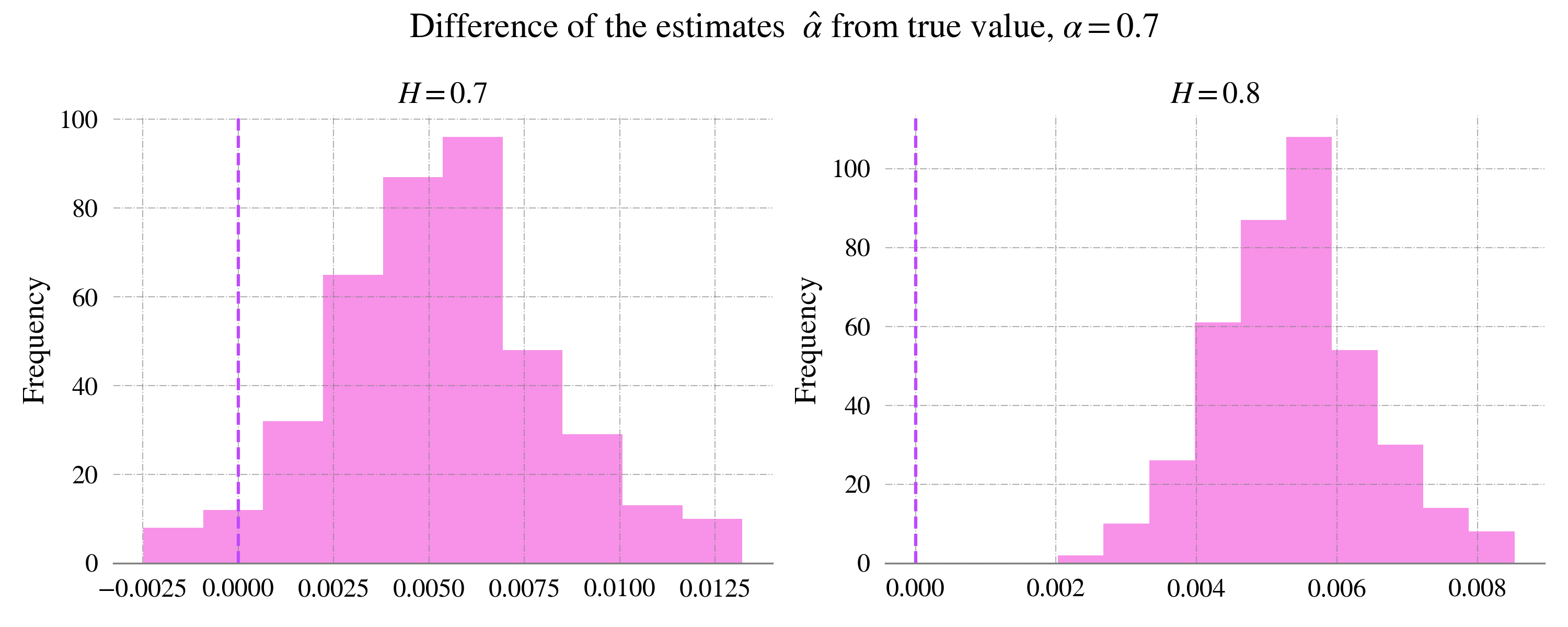}
         \caption{Parameter $\alpha$.}
     \end{subfigure}
     \hfill
     \begin{subfigure}{0.9\textwidth}
         \centering
         \includegraphics[width=\textwidth]{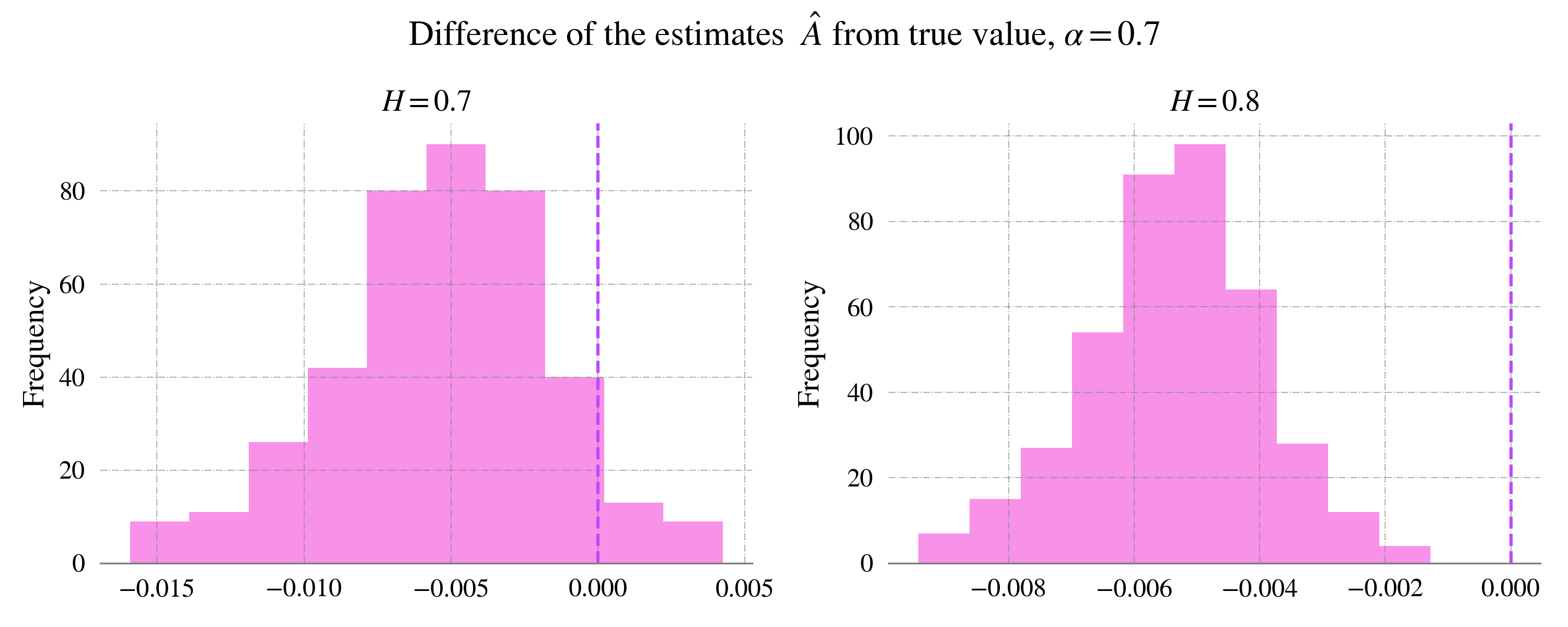}
         \caption{Parameter $A$.}
     \end{subfigure}
        \caption{Differences between parameter estimates and true values of the parameters $\alpha$ and $A$ for different values of $H$ when $\alpha=0.7$.}
        \label{fig:alpha_A_hist_0.7}
\end{figure}

Figure \ref{fig:theta_hist_0.7} displays the differences of the estimates $\hat \Theta$, computed from recovered noise, and the estimates $\hat \Theta_{\text{true noise}}$, computed from true noise, from the true value $\Theta$ as histograms for the case $\alpha=0.7$.
Based on the histograms, $\Theta$ could be well estimated from the data in this case, too.
Now, the histograms corresponding to estimates from recovered noise look very similar to the histograms corresponding to estimates from true noise for both values of $H$. 
The histograms corresponding to recovered noise are still slightly wider than the histograms corresponding to true noise, but this phenomenon is less visible than in the case $\alpha = 0.5$.
The reason for this is that when $\alpha=0.7$, the parameters $\alpha$ and $A$ could be estimated more accurately, resulting in the recovered noise being closer to the true noise.

Figure \ref{fig:sigma_hist_0.7} displays the histograms of the differences between the estimates $\hat \sigma$ and the true value $\sigma$ in the case $\alpha=0.7$.
There is a noticeable bias in the estimates, especially when $H=0.8$. This is aligned with theoretical error provided in Theorem \ref{thm:noise-error} suggesting larger error if $H$ increases compared to $\alpha$. 
However, the magnitude of the bias is small compared to the true value of the parameter (around 1 \% for $H=0.7$ and 6 \% for $H=0.8$), meaning that the parameter could be estimated well.
The same phenomenon was observed in \cite{Vasicek2025}.

\begin{figure}[h!]
\includegraphics[width=0.9\textwidth]{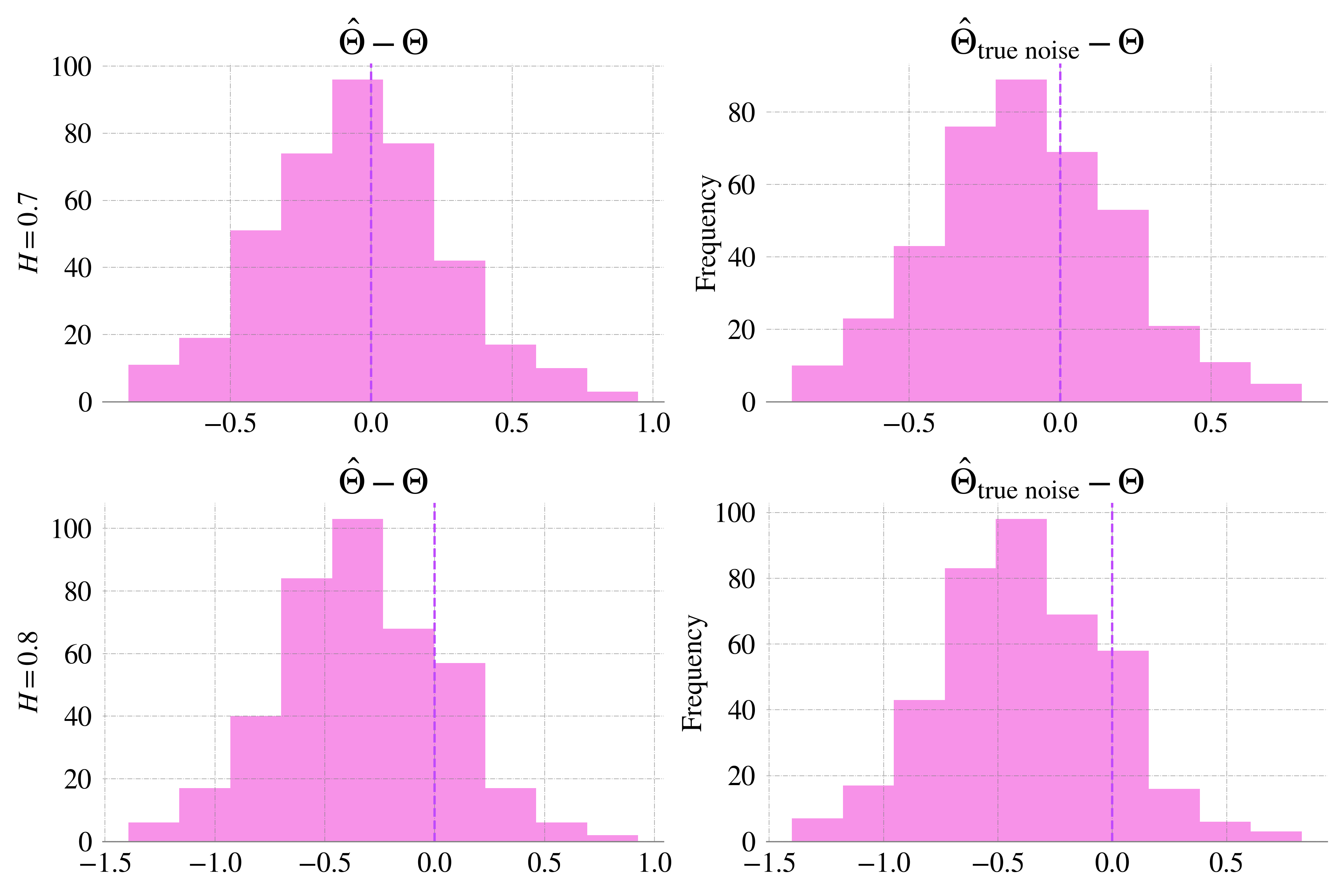}
\caption{Differences between the estimates $\hat \Theta$ from the true parameter and the estimates $\hat \Theta_{\text{true noise}}$ calculated from the real data in the case $\alpha=0.7$.}
\label{fig:theta_hist_0.7}
\end{figure}

\begin{figure}[ht]
     \centering
     \begin{subfigure}{0.9\textwidth}
         \centering
         \includegraphics[width=\textwidth]{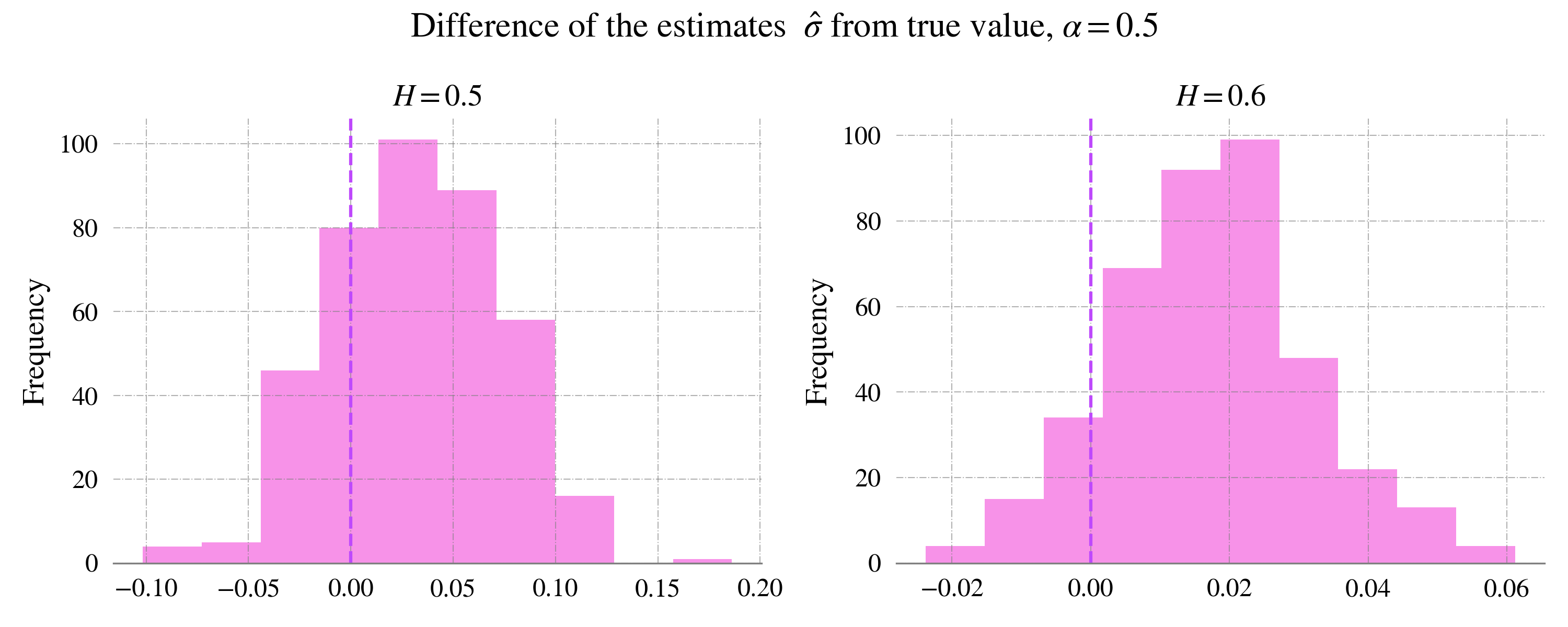}
         \caption{$\alpha = 0.5$.}
         \label{fig:sigma_hist_0.5}
     \end{subfigure}
     \hfill
     \begin{subfigure}{0.9\textwidth}
         \centering
         \includegraphics[width=\textwidth]{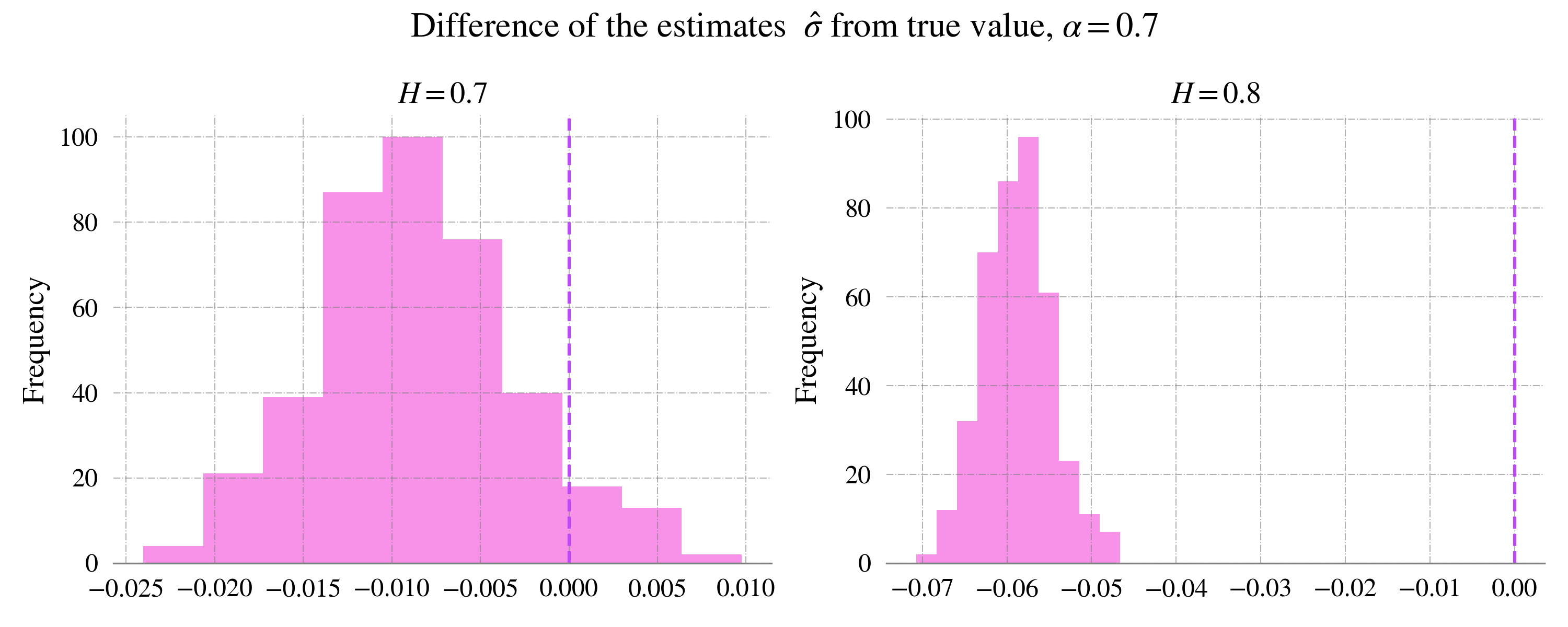}
         \caption{$\alpha=0.7$.}
         \label{fig:sigma_hist_0.7}
     \end{subfigure}
        \caption{Differences between the estimates of $\sigma$ from the true value for different values of $\alpha$ and $H$.}
        \label{fig:sigma_hist}
\end{figure}

\section{Conclusions}
\label{sec:conclusions}

In this article we have investigated a generalized stochastic fractional neuronal model
combining fractional dynamics with correlated stochastic inputs. The proposed framework
extends several classes of fractional neuronal systems previously introduced in the
literature and incorporates, within a unified setting, both hereditary effects induced
by fractional differentiation and temporal dependence generated by stochastic processes
with stationary increments.
More precisely, the model couples a fractional differential equation for the neuronal
state variable with a latent mean-reverting stochastic process describing correlated
external inputs. This formulation allows one to reproduce nonlocal temporal behavior,
persistent correlations, and non-exponential relaxation phenomena which are frequently
observed in biological and neuronal systems and which cannot be adequately captured
within classical Markovian approaches.

The main purpose of the article was not only the introduction of the model itself,
but also the development of a feasible statistical methodology for parameter estimation. For this we have proposed a two-step estimation
procedure. First, the parameters governing the fractional dynamics were estimated
through asymptotic properties of the Mittag--Leffler solution near the origin. Then,
after reconstruction of the latent stochastic input through fractional differentiation,
the parameters of the hidden noise dynamics were estimated by means of inference
techniques for generalized Ornstein--Uhlenbeck type processes.

From the theoretical viewpoint, the analysis highlights the delicate interplay between
the regularity of the driving noise and the order of the fractional derivative. In
particular, the condition
$\alpha < H$
naturally appears in the study of the reconstruction error of the latent process when
the driving noise possesses $H$-H\"older continuous trajectories. Under this regime,
the fractional differentiation operator remains sufficiently stable with respect to
small perturbations of the estimated fractional parameter, allowing a consistent
reconstruction of the hidden stochastic component on compact intervals.

The numerical experiments confirm the practical applicability of the proposed
methodology. The estimation procedure for the parameters $\alpha$ and $A$ performs
well when sufficiently many observations close to the origin are available, consistently
with the asymptotic structure of the solution. Moreover, the recovery of the latent
noise process appears sufficiently accurate to allow subsequent estimation of the
parameters $\Theta$ and $\sigma$ governing the hidden dynamics.
At the same time, the simulations also reveal intrinsic difficulties associated with
non-Markovian models with memory. In particular, the quality of the estimation of
$\Theta$ and $\sigma$ deteriorates as the regularity parameter $H$ increases, reflecting
the stronger dependence structure of the driving noise. More generally, the analysis
suggests that the reconstruction error of the latent process may accumulate over long
time intervals, while reliable estimation of covariance-based quantities requires large
observation horizons. This creates a nontrivial balance between local estimation near
the origin and long-time statistical inference.

From a broader perspective, the present work further supports the relevance
of fractional stochastic analysis in mathematical neuroscience. The combination of
fractional operators, correlated stochastic forcing, and latent-variable inference
provides a flexible and mathematically rich framework for modeling systems with memory
and complex temporal organization. In this regard, the ideas developed may
stimulate further research at the intersection of fractional calculus, stochastic
analysis, statistical inference, and neuronal modeling. As simple examples of open remaining problems, it would be interesting to study exact asymptotic analysis of the estimators and reconstructed noise process. Another natural extension concerns multidimensional neuronal systems and
interacting network models, where both the state dynamics and the latent noise process
are vector-valued. Finally, we restricted our analysis to the case $H>\alpha$. It would be interesting to examine what happens in the regime $H\leq \alpha$.


\begin{thebibliography}{9}

	\bibitem{AbundoPirozzi2021}
	M.~Abundo and E.~Pirozzi,
	\newblock Fractionally integrated Gauss--Markov processes and applications,
	\newblock {\em Communications in Nonlinear Science and Numerical Simulation},
	101 (2021), 105862.
\bibitem{Burk}	
A. N. Burkitt. 
\newblock A review of the integrate-and-fire neuron model: I. homogeneous synaptic input. \newblock {\em Biological Cybernetics}, (2006) 95:1--19.

	


    	\bibitem{Vasicek2025}
	P.~Ilmonen, M.~Laurikkala, K.~Ralchenko, T.~Sottinen and L.~Viitasaari,
	\newblock Data driven modeling of multiple interest rates with generalized
	Vasicek-type models,
	\newblock {\em arXiv preprint arXiv:2509.03208}, 2025.

	

	
	\bibitem{LeonenkoPirozzi2025}
	N.~Leonenko and E.~Pirozzi,
	\newblock The time-changed stochastic approach and fractionally integrated
	processes to model the actin--myosin interaction and dwell times,
	\newblock {\em Mathematical Biosciences and Engineering},
	22 (2025), no.~4, 1019--1054.
	

	
    \bibitem{Magin2006}
	R.~L.~Magin,
	\newblock {\em Fractional Calculus in Bioengineering},
	\newblock Begell House Publishers, Connecticut, 2006.
	
	\bibitem{Mainardi2010}
	F.~Mainardi,
	\newblock {\em Fractional Calculus and Waves in Linear Viscoelasticity},
	\newblock Imperial College Press, London, 2010.
	
	\bibitem{MeerschaertSikorskii2012}
	M.~M.~Meerschaert and A.~Sikorskii,
	\newblock {\em Stochastic Models for Fractional Calculus},
	\newblock De Gruyter, Berlin, 2012.
	
	\bibitem{MeerschaertStraka2013}
	M.~M.~Meerschaert and P.~Straka,
	\newblock Inverse stable subordinators,
	\newblock {\em Mathematical Modelling of Natural Phenomena},
	8 (2013), no.~2, 1--16.
	
	\bibitem{MetzlerKlafter2000}
	R.~Metzler and J.~Klafter,
	\newblock The random walk's guide to anomalous diffusion:
	a fractional dynamics approach,
	\newblock {\em Physics Reports},
	339 (2000), no.~1, 1--77.
	
	\bibitem{Pirozzi2018}
	E.~Pirozzi,
	\newblock Colored noise and a stochastic fractional model for correlated inputs
	and adaptation in neuronal firing,
	\newblock {\em Biological Cybernetics},
	112 (2018), no.~1--2, 25--39.

    
	\bibitem{Pirozzi2024}
	E.~Pirozzi,
	\newblock Mittag--Leffler fractional stochastic integrals and processes with applications,
	\newblock {\em Mathematics},
	12 (2024), no.~19, 3094.

    \bibitem{Pirozzi2024b}
	E.~Pirozzi,
    \newblock
	Some Fractional Stochastic Models for Neuronal Activity with Different Time-Scales and Correlated Inputs.
    \newblock {\em Fractal and Fractional.}  (2024) 8(1):57.  

    \bibitem{SacerdoteGiraudo2013}
L.~Sacerdote and M.~T.~Giraudo,
\newblock Stochastic integrate and fire models: a review on mathematical 
    methods and their applications,
\newblock In {\em Stochastic Biomathematical Models}, Lecture Notes in 
    Mathematics, vol.~2058, Springer, Heidelberg, 2013, pp.~99--148.
    
	\bibitem{Samko1993}
	S.~G.~Samko, A.~A.~Kilbas and O.~I.~Marichev,
	\newblock {\em Fractional Integrals and Derivatives: Theory and Applications},
	\newblock Gordon and Breach Science Publishers, Amsterdam, 1993.
		
	
	\bibitem{Teka2014}
	W.~Teka, T.~M.~Marinov and F.~Santamaria,
	\newblock Neuronal spike timing adaptation described with a fractional
	leaky integrate-and-fire model,
	\newblock {\em PLoS Computational Biology},
	10 (2014), no.~3, e1003526.

    \bibitem{Tuckwell1988a}
H.~C.~Tuckwell,
\newblock {\em Introduction to Theoretical Neurobiology},
\newblock Cambridge University Press, Cambridge, 1988.

	\bibitem{Uchaikin2013}
	V.~V.~Uchaikin,
	\newblock {\em Fractional Derivatives for Physicists and Engineers},
	\newblock Springer, Heidelberg, 2013.
	
\bibitem{Marko1}
M.~Voutilainen, L.~Viitasaari, P.~Ilmonen, S.~Torres, and C.~Tudor,
\newblock Vector-valued generalised {O}rnstein--{U}hlenbeck processes: 
    properties and parameter estimation,
\newblock {\em Scandinavian Journal of Statistics}, 49(3):992--1022, 2022.








            
            \end{thebibliography}
\end{document}